\newcommand{\RR}{{\mathbb{R}}}
\newcommand{\NN}{{\mathbb{N}}}
\DeclareMathOperator{\sgn}{{sgn}}
\DeclareMathOperator{\supp}{{supp}}
\newcommand\Op{\operatorname{Op}}
\theoremstyle{plain}
\newtheorem{theorem}{Theorem}
\newtheorem{proposition}[theorem]{Proposition}
\newtheorem{lemma}[theorem]{Lemma}
\theoremstyle{definition}
\theoremstyle{remark}
\newtheorem{remark}[theorem]{Remark}
\begin{document}
\title[non-QUE ergodic billiards]{Ergodic billiards that are not quantum unique ergodic}

\author[Andrew Hassell]{Andrew Hassell \\ \\ with an appendix by Andrew Hassell and Luc Hillairet}
\address{Department of Mathematics, Australian National University, Canberra 0200 ACT, \phantom{ar} AUSTRALIA}
\email{hassell@maths.anu.edu.au}

\address{UMR CNRS 6629-Universit\'{e} de Nantes, 2 rue de la Houssini\`{e}re, \\
BP 92 208, F-44 322 Nantes Cedex 3, France}
\email{Luc.Hillairet@math.univ-nantes.fr}

\keywords{Eigenfunctions,  partially rectangular domains, Bunimovich stadium, bouncing ball modes, quantum unique ergodicity}
\subjclass[2000]{35P20, 58J50}
\thanks{This research was partially supported by Discovery Grant DP0771826
from the Australian Research Council}

\maketitle
\begin{abstract} Partially rectangular domains are compact two-dimensional Riemannian manifolds $X$, either closed or with boundary, that contain a flat rectangle or cylinder. In this paper we are interested in partially rectangular domains with ergodic billiard flow; examples are the 
Bunimovich stadium, the Sinai billiard or Donnelly surfaces. 

We consider a one-parameter family $X_t$ of such domains parametrized by the aspect ratio $t$ of their rectangular part. There is convincing
theoretical and numerical evidence that  the Laplacian on $X_t$ with Dirichlet or Neumann boundary conditions is not quantum unique ergodic (QUE). We prove that this is true for all $t \in [1,2]$ excluding, possibly, a set of Lebesgue measure zero. This yields the first examples of ergodic billiard systems proven to be non-QUE. 

\end{abstract}

\section{Introduction} 
A partially rectangular domain $X$ is a compact Riemannian 2-manifold, either closed or with boundary, that contains a flat rectangle or cylinder, in the sense that $X$ can be decomposed $X = R \cup W$, where $R$ is a  rectangle\footnote{For brevity we use `rectangle' to mean `rectangle or cylinder'}, $R = [-\alpha, \alpha]_x \times [-\beta, \beta]_y$ (with $y = \pm \beta$ identified in the case of a cylinder) carrying the flat metric $dx^2 + dy^2$, and such that $R \cap W  = R \cap \{ x = \pm\alpha \}$. 

The main result of this paper is that partially rectangular domains $X$ are usually not QUE (see Theorem~\ref{ae}). This is primarily of interest in the case that $X$ has ergodic billiard flow; examples include the Bunimovich stadium, the Sinai billiard, and Donnelly's surfaces \cite{Bun}, \cite{Sinai}, \cite{D} --- see Figure~\ref{f1}. Ergodicity implies that these domains are  \emph{quantum ergodic} by a theorem of G\'erard-Leichtnam \cite{GL} and Zelditch-Zworski \cite{ZZ}, generalizing work of Schnirelman \cite{S}, Zelditch \cite{Z} and Colin de Verdi\`ere \cite{CdV} in the boundaryless case. Quantum ergodicity is a statement about the eigenfunctions $u_j$ of the positive Laplacian $\Delta$ associated to the metric on $X$, where we assume that the Dirichlet or Neumann boundary condition is specified if $X$ has boundary. 
The operator $\Delta$ has a realization as a self-adjoint operator on $L^2(X)$ and has discrete spectrum $0 < E_1 < E_2 \leq E_3 \dots \to \infty$ and corresponding orthonormal real eigenfunctions $u_j$, unique up to orthogonal transformations in each eigenspace.

The statement that $\Delta$ is quantum ergodic is the statement that 
 there exists a density one set $J$ of natural numbers such that the subsequence $(u_j)_{j \in J}$ of eigenfunctions 
has the following equidistribution property: for each semiclassical pseudodifferential operator $A_h$, properly supported in the interior of $X$, we have
\begin{equation}
\lim_{j \in J \to \infty} \langle A_{h_j} u_j, u_j \rangle_{L^2(X)} = \frac1{|S^* X|} \int_{S^* X} \sigma(A).
\label{qe}\end{equation}
Here $h_j = E_j^{-1/2}$ is the length scale corresponding to $u_j$, $S^*X$ is the cosphere bundle of $X$ (the bundle of unit cotangent vectors), and $|S^*X|$ denotes the measure of $S^*X$ with respect to the natural measure induced by Liouville measure on $T^*X$. In particular, this holds when $A_h$ is multiplication by a smooth function $\eta$ supported in the interior $X^\circ$ of $X$. In that case, \eqref{qe} reads 
$$
\lim_{j \in J \to \infty} \int_X \eta \, u_j^2  \, dg  = \frac1{|X|} \int_{X} \eta \, dg
$$
which implies that the probability measures $u_j^2$ tend weakly to uniform measure on $X$ (for $j \in J$); the condition \eqref{qe} is a finer version of this statement that can be interpreted as equidistribution of the $u_j$, $j \in J$ in phase space. 
\emph{Quantum unique ergodicity} (QUE) is the stronger property that \eqref{qe} holds for the full sequence of eigenfunctions, i.e. that $J$ can be taken to be $\mathbb{N}$. 

These properties can also be expressed in terms of quantum limits, or semiclassical measures. These are measures  on $T^*X$ obtained as weak limits of subsequences of the measures $\mu_j$ which act on compactly supported functions on $T^*X^\circ$ according to 
$$
C_c^\infty (T^*X^\circ) \ni a(x, \xi) \mapsto \langle \Op_{h_j}(a) u_j, u_j \rangle.
$$
Here $\Op_h$ is a semiclassical quantization of the symbol $a$. Quantum ergodicity then is the statement that for a density one sequence $J$ of integers, the sequence $\mu_j$ converges weakly to uniform measure on $S^*X$, and QUE the same property with $J = \NN$. 

There are rather few results, either positive or negative, on quantum unique ergodicity. Rudnick-Sarnak \cite{RS} conjectured that 
closed hyperbolic manifolds are always QUE. This has been verified by Lindenstrauss, Silbermann-Venkatesh and Holowinsky-Soundararajan in some arithmetic cases  \cite{L} \cite{SV} \cite{SV2} \cite{HS}, provided one restricts to Hecke eigenfunctions which removes any eigenvalue degeneracy which might be present in the spectrum. In the  negative direction,  Faure-Nonnenmacher and 
De Bi\`evre-Faure-Nonnenmacher  \cite{FN}, \cite{FNDB} showed that certain quantized cat maps on the torus are non-QUE. In related work, Anantharaman \cite{A} has shown that quantum limits  on a closed, negatively curved manifold have positive entropy, which rules out quantum limits supported on a finite number of periodic geodesics.  
Up till now there have been no billiard systems rigorously proved to be either QUE or non-QUE. 

The results of the present paper are based crucially on the fact that partially rectangular domains $X$ may be considered part of a one-parameter family $X_t$ where we fix the height $\beta$ of the rectangle and vary the length $\alpha$. Here we arbitrarily set $\beta = \pi/2$ and let $\alpha = t \beta$ with $t \in [1,2]$. Our main result is 

\begin{theorem}\label{ae} For almost every value of $t \in [1,2]$, 
the Laplacian on  $X_t$ is non-QUE. 
\end{theorem}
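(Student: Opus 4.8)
The plan is to exploit the rectangular part $R$ of $X_t$ and the existence of "bouncing ball" quasimodes supported there. In the flat rectangle $[-\alpha,\alpha]_x\times[-\beta,\beta]_y$ with $\alpha = t\beta$, the functions that in the $y$-variable are the Dirichlet (or Neumann) eigenfunctions $\sin(n y/\beta)$ and in the $x$-variable are cut off smoothly away from $x=\pm\alpha$ are approximate eigenfunctions with eigenvalue close to $(n/\beta)^2$; their mass concentrates on the set of vertical bouncing-ball trajectories in $R$. The strategy is: \emph{if} QUE held, then every quantum limit would be Liouville measure, and in particular the eigenfunctions would not be able to "resonate" with these bouncing-ball quasimodes; quantitatively, the spectral projector of $\Delta$ applied to a bouncing-ball quasimode would have to spread its mass over an interval of eigenvalues, and one can extract from this a contradiction with the smallness of the quasimode error — \emph{provided} the eigenvalue $(n/\beta)^2$ is not too well approximated by the actual spectrum in a way that is incompatible with QUE. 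The role of the parameter $t$ is that the relevant approximation/separation condition can be arranged for a full-measure set of $t$.

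More concretely, I would proceed as follows. First, construct a quasimode $v_n$ concentrated on bouncing-ball trajectories, with $\|(\Delta - \lambda_n^2)v_n\| = O(\lambda_n^{1-k})$ for any $k$ after optimizing the cutoff (here $\lambda_n = n/\beta$), and compute $\langle A v_n, v_n\rangle$ for a suitable pseudodifferential (or multiplication) operator $A$ testing concentration near the vertical direction inside $R$: this pairing is bounded below by a positive constant $c_0$, whereas Liouville measure gives a strictly smaller value $c_1 < c_0$ (because vertical directions over $R$ are a set of positive but not full Liouville measure, and moreover $R\neq X$). Second, decompose $v_n = \sum_j \langle v_n, u_j\rangle u_j$ and observe that the quasimode error forces the spectral mass $\sum_j |\langle v_n, u_j\rangle|^2$ to be concentrated, up to $o(1)$, in a window $|E_j - \lambda_n^2| \le \delta_n$ with $\delta_n \to 0$ (after rescaling by $\lambda_n$, a window of size $o(\lambda_n)$ in $\sqrt{E_j}$). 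Third — and this is where the Weyl law and the one-parameter family enter — I would want to say that for a.e.\ $t$ there are infinitely many $n$ such that this window contains only eigenvalues $E_j$ \emph{all} of whose eigenfunctions already violate \eqref{qe} for $A$, or more robustly, that one can average: $c_0 + o(1) \le \langle A v_n, v_n\rangle = \sum_j |\langle v_n, u_j\rangle|^2 \langle A u_j, u_j\rangle + (\text{cross terms})$, and if QUE held every $\langle A u_j, u_j\rangle \to c_1 < c_0$, while the cross terms are $o(1)$ by an almost-orthogonality/non-degeneracy argument, giving $c_0 \le c_1$, a contradiction. The cross-term control is exactly where genuine eigenvalue spacing information is needed, hence where the exceptional null set of $t$ comes from: one needs, for infinitely many $n$, either a spectral gap near $\lambda_n^2$ of size $\gg$ (error), or a simplicity/separation statement, holding for a.e.\ $t$ by a measure-theoretic (Borel–Cantelli type) argument over $t$ combined with the dependence of eigenvalues on $t$.

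The main obstacle, and the technical heart of the argument, is precisely handling the cross terms / near-degeneracies: if many eigenvalues $E_j$ cluster within the quasimode window and their eigenfunctions are not individually controlled, one cannot directly conclude a single $u_j$ violates QUE. The way I expect to resolve this is a Hadamard-type variational formula: as $t$ varies, the eigenvalues $E_j(t)$ move with speed governed by the normal derivative of the eigenfunction along the moving boundary $x=\pm\alpha$, and bouncing-ball concentration forces a \emph{large} such derivative, hence a large $|dE_j/dt|$, for any eigenfunction with substantial overlap with $v_n$. A Lipschitz/monotonicity argument then shows that for a.e.\ $t$ the spectrum near $\lambda_n^2$ cannot be "stuck" — there is a single eigenvalue crossing $\lambda_n^2$ transversally with controlled multiplicity — and at such $t$ one isolates an eigenfunction carrying a definite fraction of the quasimode mass, which therefore has $\langle A u_j,u_j\rangle \ge c_0 - o(1) > c_1$, contradicting QUE. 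Making the "a.e.\ $t$" rigorous — quantifying how the exceptional set shrinks as $n\to\infty$ and invoking the Borel–Cantelli lemma — is the remaining delicate point, but it is a soft measure-theoretic step once the variational estimate on $dE_j/dt$ is in hand.
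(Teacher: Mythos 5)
Your overall flavor (bouncing-ball quasimodes plus a Hadamard variational formula plus a measure-theoretic argument in $t$) matches the paper, but two of your central technical claims are wrong, and they are load-bearing. First, the bouncing-ball quasimode error cannot be made $O(\lambda_n^{1-k})$ ``after optimizing the cutoff'': the error is $\chi''(x)\sin(ny)$ (plus lower order), whose $L^2$ norm is independent of $n$, so these are only $O(1)$ quasimodes; by the Burq--Zworski result quoted in the paper, $o(1)$ quasimodes cannot even concentrate strictly inside the rectangle. Consequently your spectral window $|E_j-\lambda_n^2|\le\delta_n$ with $\delta_n\to 0$ does not exist --- the window has a \emph{fixed} width $O(1)$, and everything you build on the shrinking window (near-resonance, isolating a single transversally crossing eigenvalue, cross-term smallness) collapses. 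Second, your proposed mechanism is inverted: an eigenfunction that overlaps strongly with a bouncing-ball quasimode concentrates on vertical trajectories, which hit the \emph{horizontal} sides, not the moving part of the boundary ($x=\pm\alpha$ and the wings); by the Hadamard formula such an eigenvalue has \emph{small} $|\dot E_j|$, not large. So ``bouncing-ball concentration forces large $|dE_j/dt|$'' is false, and with it the transversal-crossing picture you rely on for the a.e.\ $t$ step.

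The paper's actual argument avoids both issues. It never controls cross terms or isolates a single eigenvalue: with the $O(1)$ quasimode it only needs that for infinitely many $n$ the number of eigenvalues in the fixed window $[n^2-2K,n^2+2K]$ is bounded by some $M$; then the spectral projection bound $\|P_{[n^2-2K,n^2+2K]}v_n\|^2\ge 3/4$ and pigeonholing produce an eigenfunction with overlap $\ge\sqrt{3/4M}$, hence a quantum limit with mass $\ge 3/4M$ on the (Liouville-null) bouncing-ball set --- already non-QUE. The role of $t$ is handled by a dichotomy using $f_j(t)=-E_j^{-1}\dot E_j$, the mass of $u_j$ on the moving (curved) part of the boundary: if $\liminf_j f_j(t)=0$ then a subsequence of eigenfunctions has vanishing mass there and the G\'erard--Leichtnam boundary-measure machinery \emph{directly} yields a quantum limit on bouncing-ball trajectories (no quasimode needed); if instead $f_j(t)\ge c>0$ uniformly for large $j$ on a set of $t$ of nearly full measure in $Z_2$, then \emph{every} relevant eigenvalue drifts at speed $\ge cE_j/4$ and monotonically, so averaging the counting function $N_t(n^2+a)-N_t(n^2-a)$ over $t$ gives a bound independent of $n$; Chebyshev plus a combinatorial (Borel--Cantelli-like) selection then gives, for a set of $t$ of measure $\ge 1-4\epsilon$, infinitely many $n$ with a uniformly bounded count, which feeds the pigeonhole argument. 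Your proposal would need to be restructured along these lines: the parameter $t$ is used to control the \emph{number} of eigenvalues in fixed windows via eigenvalue velocity of the non-bouncing-ball eigenfunctions, not to force a clean transversal crossing resonant with the quasimode.
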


This is proved first in the simple setting of the stadium billiard with the Dirichlet boundary condition. In the appendix, which is joint work with Luc Hillairet, we show how to obtain the result for any partially rectangular domain.

\begin{figure}\label{f1}
\includegraphics[scale=.2]{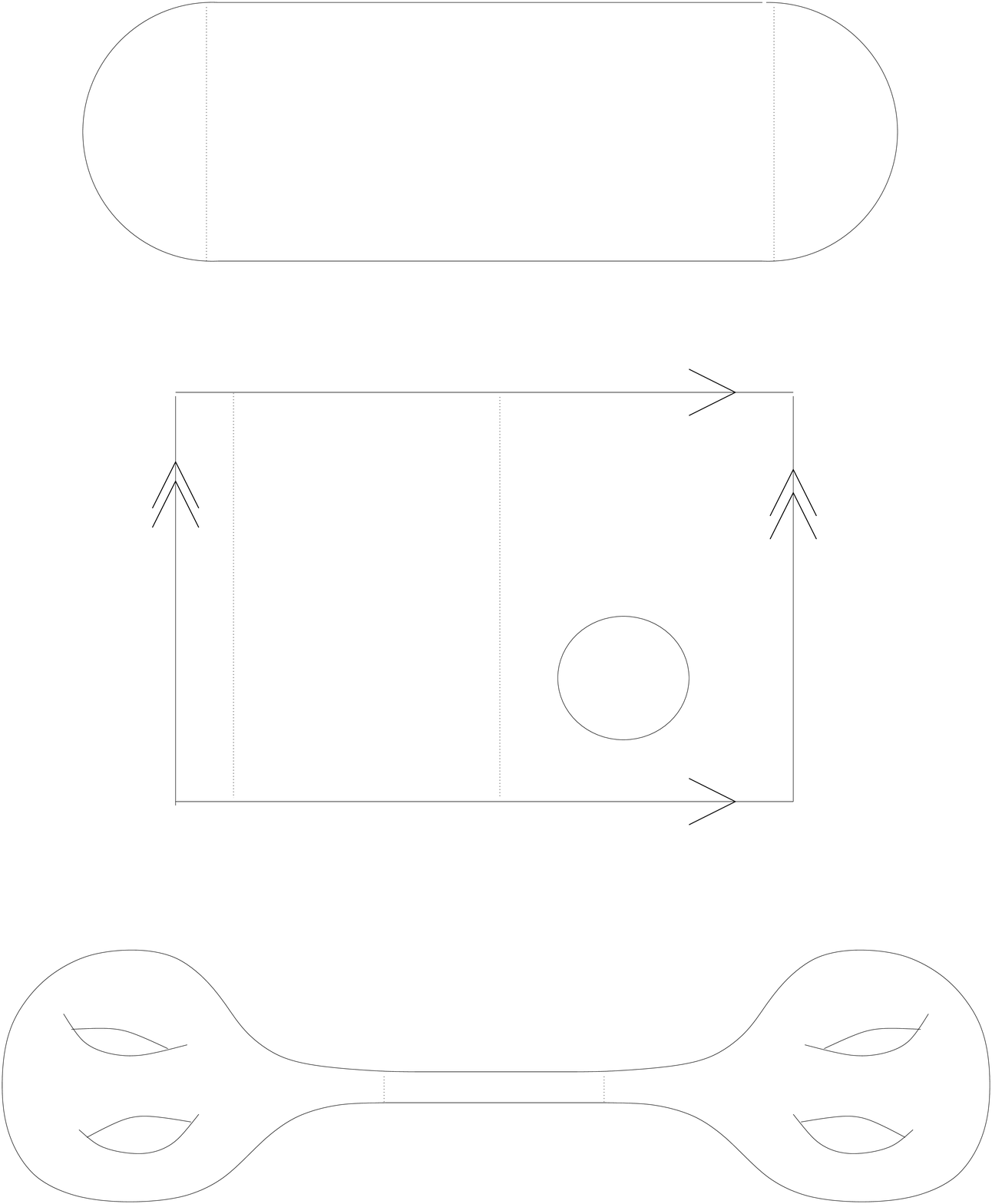}
\caption{Examples of ergodic, partially rectangular domains: the Bunimovich stadium (top), Sinai billiard (middle), Donnelly surface (bottom). }
\end{figure}

The proof is based on the original argument of Heller and O'Connor \cite{HOC} as refined by Zelditch \cite{Zque}, using `bouncing ball' quasimodes. Their argument shows that QUE fails provided that one can find a subsequence of intervals of the form $[n^2 - a, n^2 + a]$, for arbitrary fixed $ a >0$, such that the number of eigenvalues in this interval is bounded uniformly as $n \to \infty$ along this subsequence. Note that in two dimensions, the expected number of eigenvalues in the interval $[E-a, E+a]$ is independent of $E$, so this is a very plausible condition.

Let us recall this argument in more detail. For simplicity, suppose the Dirichlet boundary condition is imposed on the horizontal sides of the rectangle $R$. 
Consider the function $v_n \in \operatorname{dom}(\Delta)$  given by  $\chi(x) \sin ny$ for even $n$ and $\chi(x) \cos ny$ for odd $n$, where $\chi(x)$ is supported in $x \in [-\pi/4, \pi/4]$. (For other boundary conditions, we replace $\sin ny$ and $\cos ny$ by the corresponding one-dimensional eigenfunctions; in the cylindrical case, we use $e^{\pm i n y}$ and take $n$ even.) 
For convenience, we choose $\chi$ so that $\| v_n \|_{L^2} = 1$ for all $n$.  The $v_n$ are so-called `bouncing ball'  quasimodes; they concentrate semiclassically as $n \to \infty$ onto a subset of the bouncing ball trajectories, which are the periodic trajectories  that bounce vertically (i.e. with $x$ fixed) between the horizontal sides of the rectangle $R$.  They satisfy $\| (\Delta -  n^2) v_n \|_{L^2} \leq K$, uniformly in $n$. It follows from basic spectral theory that 
\begin{equation}
\| P_{[n^2 - 2K, n^2 + 2K]} v_n  \|^2 \geq \frac{3}{4}
\label{proj}\end{equation}
where $P_I$ is the spectral projection of the operator $\Delta$ corresponding to the set $I \subset \RR$. Suppose there exists a subsequence $n_j$ of integers with the property that there exists $M$, independent of $j$, such that
\begin{equation}\label{subseq}
\text{there are at most $M$ eigenvalues of $\Delta$ in the interval $[n_j^2 - 2K, n_j^2 + 2K]$}.
\end{equation} 
Then for each $n_j$ there is a normalized eigenfunction $u_{k_j}$ such that $\langle u_{k_j}, v_{n_j} \rangle \geq \sqrt{3/4M}$ (choose the normalized  eigenfunction with eigenvalue in the interval $[n_j^2 - 2K, n_j^2 + 2K]$ with the largest component in the direction of  $v_n$; there is at least one eigenfunction with eigenvalue in this range thanks to \eqref{proj}). Then the sequence $(u_{j_k})$ of eigenfunctions has positive mass along bouncing ball trajectories, and in particular is not equidistributed. To see this, given any $\epsilon > 0$, 
let $A$ be a self-adjoint semiclassical pseudodifferential operator, properly supported in the rectangle in both variables, so that $\sigma(A) \leq 1$ and so that $\| (\operatorname{Id} - A) v_n \| \leq \epsilon$ for sufficiently large $n$.
Then, we can compute
\begin{equation*}\begin{gathered}
 \langle A^2 u_{k_j}, u_{k_j} \rangle  = \| A u_k \|^2 \geq \Big|\langle A u_{k_j}, v_{n_j} \rangle\Big|^2 \\
= \Big|\langle  u_{k_j}, A v_{n_j} \rangle\Big|^2 \geq \Big( |\langle  u_{k_j},  v_{n_j} \rangle | - \epsilon \Big)^2
\geq \Big( \sqrt{3/4M} - \epsilon \Big)^2 .
\end{gathered}\end{equation*}
This is bounded away from zero for small $\epsilon$. By choosing  a sequence of operators $A$ such that $\| (\operatorname{Id} - A) v_n \| \to 0$ and such that the support of the symbol of $A$ shrinks to the set of bouncing ball covectors (i.e. multiples of $dy$ supported in the rectangle), we see that the mass of any quantum limit obtained by subsequences of the $u_{k_j}$ must have mass at least $3/4M$ on the bouncing ball trajectories. 

The missing step in this argument, supplied by the present paper (at least for a large measure set in the parameter $t$), is to show that there are indeed sequences $n_j \to \infty$ so that \eqref{subseq} holds.

\begin{remark} Burq and Zworski \cite{BZ} have shown that $o(1)$ quasimodes, unlike $O(1)$ quasimodes, cannot concentrate asymptotically \emph{strictly} inside the rectangle $R = [-\alpha, \alpha] \times [-\beta, \beta]$, in the sense that they cannot concentrate in subrectangles $\omega \times [-\beta, \beta]$ with $\omega$ a strict closed subinterval of $[-\alpha, \alpha]$. 
\end{remark}

\noindent\emph{Acknowledgements.} I wish to thank Maciej Zworski, Steve Zelditch and Alex Barnett  for useful comments on a draft of this manuscript, Harold Donnelly and St\'ephane Nonnenmacher for pointing out several improvements to the first version of this paper, 
and Patrick G\'erard for helpful discussions. 
I especially thank Steve Zelditch for encouraging me to work on this problem several years ago, and for numerous fruitful discussions since that time. 

\section{Hadamard variational formula}\label{Hadamard}
Let $S_t$ denote the stadium billiard with aspect ratio $t$, given explicitly as the union of the rectangle $[-t\pi/2, t\pi/2]_x \times [-\pi/2, \pi/2]_y$ and the circles centred at $(\pm t\pi/2, 0)$ with radius $\pi/2$. 
Let 
 $\Delta_t$ denote the Laplacian on $S_t$ with Dirichlet boundary conditions. Define $E_j(t)$ to be the $j$th eigenvalue (counted with multiplicity) of $\Delta_t$. The key to the proof of Theorem~\ref{ae} for $S_t$ will be a consideration of how $E_j(t)$ varies with $t$. Let $u_j(t)$ denote an eigenfunction of $\Delta_t$ with eigenvalue $E_j(t)$ (chosen orthonormally for each $t$), and let $\psi_j(t)$ denote $E_j^{-1/2}$ times the outward-pointing normal derivative $d_n u_j(t)$ of $u_j(t)$ at the boundary of $S_t$. Let $\rho_t(s)$ denote the function on $\partial S_t$ given by $\rho_t(s) = (\sgn x) \partial_x \cdot n/2$, where $n$ is the outward-pointing unit normal vector at $\partial S_t$. 
 The function $\rho_t$ is the `normal variation' of the boundary $\partial S_t$ with respect to $t$. 
 Notice that $\rho_t \geq 0$ everywhere. 
 
We first observe that the eigenvalue branches $E(t)$ can be chosen holomorphic in $t$. To see this, we fix a  reference domain  $S_1$ and consider
the family of metrics 
$$
g_t = (1 + (t-1) \phi(x))^2 dx^2 + dy^2,
$$
where $\phi(x)$ is nonnegative, positive at $x=0$ and supported
close to $x=0$. If $\int \phi = 1$, then $S_1$ with this metric is isometric to $S_t$ for $1 \leq t \leq 2$. 
Note that $g_t$ is a real analytic family of metrics. Then $\Delta_t$ is (unitarily equivalent to) the Laplacian with respect to the metric $g_t$ on $S_1$.

The analytic family of metrics $g_t$ gives rise to a holomorphic family
of elliptic operators $\tilde L_t$  for $t$ in a complex neighbourhood of $[1,2]$ (with complex coefficients for $t$ non-real), equal to $\Delta_t$ for real $t$. This operator  
acts on $L^2(S_1; dg_t)$ with domain $H^2(S_1) \cap H^1_0(S_1)$,
where $dg_t$ is the measure $(1 + (t-1) \phi(x)) dx dy$. Define the operator $V_t$ by $V_t (f) = (1 + (t-1) \phi(x))^{1/2} f$, which for $t$ real is a unitary operator from $L^2(S_1; dg_t) \to L^2(S_t; dg_1)$. 
Then $\tilde L_t$ is similar to the holomorphic family of operators $L_t = V_t \tilde L_t V_t^{-1}$ acting on $L^2(S_1; dg_1)$ with domain $H^2(S_1) \cap H^1_0(S_1)$, and is unitarily equivalent to $L_t$ for real $t$. 
The family $L_t$ is a holomorphic family of type A in the sense of Kato's book \cite{Kato}. Accordingly, 
the eigenvalues $E(t)$ and eigenprojections can be chosen holomorphic in $t$. Let $u(t)$ be a holomorphic family of eigenfunctions, normalized for real $t$, corresponding to $E(t)$.

\begin{lemma}[Hadamard variational formula] We have
\begin{equation}
\frac{d}{dt} E(t) = -  \int_{\partial S_t} \rho_t(s)(d_n u(t)(s))^2 \, ds.
\label{hv}\end{equation}
\end{lemma}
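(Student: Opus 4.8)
The plan is to reduce \eqref{hv} to the classical Hadamard formula for a moving domain, borrowing from the discussion above only the fact that $E(t)$ and the eigenfunction $u(t)$ can be chosen holomorphic, so that the left side of \eqref{hv} genuinely exists even where eigenvalues cross. Since $(S_1,g_t)$ is isometric to $S_t$, the holomorphic family $u(t)$ transplants to a family $\tilde u(t)$ of honest Dirichlet eigenfunctions on $S_t$, with $\Delta_t\tilde u(t)=E(t)\tilde u(t)$, $\tilde u(t)|_{\partial S_t}=0$ and $\|\tilde u(t)\|_{L^2(S_t)}=1$; pulling back by a fixed real-analytic family of diffeomorphisms $\Phi_t\colon S_1\to S_t$ (for instance the isometry itself, built explicitly from $\phi$), the $t$-dependence is real-analytic. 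Let $v=\partial_t\Phi_t\circ\Phi_t^{-1}$ denote the associated velocity field along $\partial S_t$. A direct geometric computation shows that near the two circular caps $\Phi_t$ is a rigid horizontal translation, so the normal velocity $v\cdot n$ is supported on the two arcs, vanishes on the straight segments and at the four junction points, and — with the normalization fixed in the statement — equals $\rho_t$.

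Granting this, and writing $\dot f$ for $df/dt$, I would argue as follows (all the manipulations below being made rigorous by transplanting to the fixed domain $S_1$ via $\Phi_t$). From $E=\int_{S_t}|\nabla\tilde u|^2\,dg$ and the transport (Reynolds) formula $\frac{d}{dt}\int_{S_t}f=\int_{S_t}\dot f+\int_{\partial S_t}f\,(v\cdot n)\,ds$,
\[
\dot E=2\int_{S_t}\nabla\tilde u\cdot\nabla\dot{\tilde u}\,dg+\int_{\partial S_t}|\nabla\tilde u|^2\,(v\cdot n)\,ds .
\]
On $\partial S_t$ one has $\tilde u\equiv0$, so the gradient there is purely normal and $|\nabla\tilde u|^2=(d_n\tilde u)^2$. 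Integrating by parts in the first term and using $\Delta_t\tilde u=E\tilde u$,
\[
2\int_{S_t}\nabla\tilde u\cdot\nabla\dot{\tilde u}\,dg=-2E\int_{S_t}\tilde u\,\dot{\tilde u}\,dg+2\int_{\partial S_t}(d_n\tilde u)\,\dot{\tilde u}\,ds .
\]
Differentiating $\|\tilde u\|_{L^2(S_t)}^2=1$ by the same transport formula and using $\tilde u^2=0$ on the boundary gives $\int_{S_t}\tilde u\,\dot{\tilde u}\,dg=0$, so the first term drops out. For the boundary term, differentiating the identity $\tilde u(t,p(t))=0$ along a moving boundary point $p(t)$ with $\dot p=v$ yields $\dot{\tilde u}=-(d_n\tilde u)(v\cdot n)$ on $\partial S_t$, whence $2\int_{\partial S_t}(d_n\tilde u)\dot{\tilde u}\,ds=-2\int_{\partial S_t}(d_n\tilde u)^2(v\cdot n)\,ds$. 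Combining,
\[
\dot E=-2\int_{\partial S_t}(d_n\tilde u)^2(v\cdot n)\,ds+\int_{\partial S_t}(d_n\tilde u)^2(v\cdot n)\,ds=-\int_{\partial S_t}\rho_t\,(d_n\tilde u)^2\,ds,
\]
which is \eqref{hv}.

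This computation is formal, and its justification is where the actual work lies; I expect boundary regularity near the four corners to be the only real obstacle. The boundary $\partial S_t$ is $C^{1,1}$ but not $C^2$ — the curvature jumps where the arcs meet the segments — so $\tilde u(t)$ is smooth up to the boundary away from those four junction points and, by elliptic regularity on $C^{1,1}$ domains, lies in $H^2(S_t)$ with $t$-uniform bounds; in particular $d_n\tilde u(t)\in L^2(\partial S_t)$ and is continuous away from the junctions, which is enough to make the boundary integrals and the integrations by parts above legitimate. The corner issue is further mitigated because $\rho_t$ is supported on the arcs and vanishes at the junctions, so the weight in \eqref{hv} is small exactly where $\tilde u$ is least regular; a standard cutoff/limiting argument near the four junction points removes the last scruple, and the interchange of $d/dt$ with the integrals, together with the transport and divergence-theorem steps, is routine for the $t$-analytic, uniformly-$H^2$ family obtained above. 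An equivalent but more computational route stays on the fixed space $L^2(S_1;dg_1)$: from holomorphic perturbation theory $\dot E=\langle\dot L_t u,u\rangle$, and since $\partial_t g_t$ is supported near $x=0$ one then rewrites this interior expression as the boundary integral \eqref{hv} by integrating by parts against $\Delta_t u=Eu$; the bookkeeping is heavier but the corner analysis is identical.
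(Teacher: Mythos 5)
Your proposal is correct, and it fills in exactly what the paper leaves to a citation: the paper offers no written proof of the lemma, referring instead to Garabedian for the classical Hadamard formula and remarking that it can also be derived from the proof of Proposition~\ref{Edot} via $\dot L_t = [L_t, \partial_x^t \Phi + \Phi \partial_x^t]$, $\Phi = \int\phi$. Your main line --- the transport-theorem computation on the moving domain, with the Dirichlet condition differentiated along the moving boundary, the normalization killing the $\int \tilde u\,\dot{\tilde u}$ term, and the holomorphic family $L_t$ used to make $\dot E$ and $\dot{\tilde u}$ well defined through eigenvalue crossings --- is precisely that standard proof, and your observation that $\rho_t$ vanishes at the four $C^{1,1}$ junction points where the eigenfunction is least regular correctly disposes of the only regularity issue (the stadium is convex, so uniform $H^2$ bounds and smoothness up to the boundary away from the junctions are available). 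The alternative you sketch at the end, $\dot E = \langle \dot L_t u, u\rangle$ on the fixed domain followed by integration by parts against $\Delta_t u = Eu$, is the route the paper itself indicates; its payoff, exploited in the appendix, is that it produces an interior formula (Proposition~\ref{Edot}) needing no boundary regularity at all, which is what makes the argument portable to general partially rectangular domains. Two minor points: with the paper's positive Laplacian the intermediate Green identity should read $+2E\int_{S_t}\tilde u\,\dot{\tilde u}$ rather than $-2E\int_{S_t}\tilde u\,\dot{\tilde u}$ (harmless, since that term vanishes), and the identification $v\cdot n = \rho_t$ holds only up to the constant $d\alpha/dt = \pi/2$ versus the factor $1/2$ in the paper's definition of $\rho_t$ --- a bookkeeping discrepancy in the paper's normalization that affects nothing later, since only $\rho_t \ge 0$ and $\int_{\partial S_t}\rho_t > 0$ are used.
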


This is a standard formula (see e.g. \cite{Gara}). It can also be derived from the proof of Proposition~\ref{Edot} using the formula $\dot L_t = [L_t, \partial_x^t \Phi + \Phi \partial_x^t]$ where $\Phi = \int \phi$. 

Now we return to ordering the eigenfunctions $u(t)$ by eigenvalue for each fixed $t$. It follows from holomorphy of the eigenprojections that either $E_j(t) = E_k(t)$ for all $t \in [1,2]$, or $E_j(t) = E_k(t)$ for at most finitely many $t \in [1,2]$. Thus $E_j(t)$ is piecewise smooth and, except for finitely many values of $t$, its derivative satisfies according to \eqref{hv}
\begin{equation}
E_j^{-1} \frac{d}{dt} E_j(t) = -  \int_{\partial S_t} \rho_t(s) \psi_j(s)^2 \, ds.
\label{hvf}\end{equation}

This formula is the basic tool we shall use to prove Theorem~\ref{ae}. 

In Section~\ref{proof}, we will actually prove the following stronger version of Theorem~\ref{ae},  which gives more
information about non-Liouville quantum limits on $S_t$.

\begin{theorem}\label{strong} 
For every $\epsilon > 0$ there exists a subset $B_\epsilon \subset [1,2]$ of measure at least $1-4\epsilon$, and a strictly positive constant $m(\epsilon)$ with the following property. 
For every $t \in B_\epsilon$, there exists a 
quantum limit formed from  Dirichlet eigenfunctions on the stadium $S_t$ that has mass at least $m(\epsilon)$ on the bouncing ball trajectories.  \end{theorem}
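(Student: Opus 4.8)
The plan is to exploit the Hadamard variational formula \eqref{hvf} together with a pigeonhole/averaging argument in the parameter $t$. The idea is that the key hypothesis \eqref{subseq} — that some interval $[n_j^2 - 2K, n_j^2 + 2K]$ contains boundedly many eigenvalues — fails to hold for a given $t$ only if eigenvalues are ``crowding'' near the numbers $n^2$, and crowding of many eigenvalue branches through a narrow window forces their derivatives $E_j'(t)$ to be, on average, quite large in absolute value. But \eqref{hvf} controls $E_j^{-1} E_j'(t)$ by $\int_{\partial S_t} \rho_t \psi_j^2\, ds$, and a trace-type or completeness estimate should bound the average of this quantity over $j$ in a spectral window, since $\rho_t$ is a bounded, nonnegative, fixed function and $\sum_j \psi_j^2$ restricted to a window of size $O(1)$ around energy $E$ has controlled boundary mass (essentially by Rellich's identity / the boundary analogue of the local Weyl law). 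So the ``bad'' set of $t$ where eigenvalues move too fast through the windows $[n^2 - 2K, n^2 + 2K]$ for all large $n$ has small measure, and on the complement \eqref{subseq} holds along some subsequence $n_j$, after which the Heller–O'Connor–Zelditch argument recalled above produces the quantum limit with positive bouncing-ball mass.

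More concretely, first I would fix $\epsilon > 0$ and a large parameter and, for each integer $n$, consider the ``count'' $N_n(t)$ of eigenvalues $E_j(t)$ in $[n^2 - 2K, n^2 + 2K]$. Using that $E_j(t)$ is piecewise-real-analytic and monotone nonincreasing in $t$ (since $\rho_t \geq 0$, the right side of \eqref{hv} is $\le 0$), each branch sweeps monotonically downward, so the set of $t \in [1,2]$ for which $E_j(t)$ lies in a fixed window has measure controlled by the window width divided by the speed $|E_j'(t)|$. Summing over $j$ and integrating in $t$, $\int_1^2 N_n(t)\, dt$ is comparable to $\sum_j \meas\{t : E_j(t) \in [n^2-2K,n^2+2K]\}$, and the Hadamard formula converts each such measure into an expression involving $\int \rho_t \psi_j^2$; the boundary local Weyl law (or a direct Rellich-identity estimate, exploiting that on the rectangular part the normal derivative is explicitly computable and on the wings it is controlled) bounds this uniformly, yielding $\int_1^2 N_n(t)\, dt \leq C$ independent of $n$. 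Then by Chebyshev/Borel–Cantelli-type reasoning, for all $t$ outside a set of measure $\leq \epsilon$ one has $N_n(t) \leq M(\epsilon)$ for infinitely many $n$; combined over a countable family this gives the set $B_\epsilon$ of measure $\geq 1 - 4\epsilon$ (the factor accounting for several such bad sets being removed), and on $B_\epsilon$ the quasimode argument runs with $m(\epsilon)$ proportional to $1/M(\epsilon)$.

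The main obstacle I expect is making the averaging rigorous despite the branch structure: the eigenvalue branches are only piecewise analytic when reordered by size, crossings occur, and one must be careful that the ``measure of $t$ with $E_j(t)$ in a window'' is genuinely controlled by $\int |E_j'(t)|^{-1}\,$-type quantities without the speed $|E_j'(t)|$ degenerating — i.e. one needs a quantitative lower bound on how fast branches move, or else an argument that slowly-moving branches are few. This is exactly where \eqref{hvf} is essential in the other direction: a branch that is nearly stationary over an interval of $t$ has $\int_{\partial S_t}\rho_t \psi_j^2$ nearly zero there, meaning the eigenfunction has almost no normal derivative on the curved wings (where $\rho_t > 0$), which by unique continuation / Rellich considerations is impossible for a genuine eigenfunction — or at least can only happen for a negligible fraction. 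A secondary technical point is the boundary local Weyl law with the nonnegative weight $\rho_t$ on a domain with corners (the stadium has $C^1$ but not $C^2$ boundary at the rectangle–wing junction); one should check the relevant trace estimate $\sum_{E_j(t) \in [E-a,E+a]} \int_{\partial S_t} \rho_t \psi_j^2\, ds \leq C$ holds uniformly in $E$ and in $t \in [1,2]$, which I would establish via the Rellich commutator identity $[\Delta_t, x\partial_x]$-type argument, whose boundary term is precisely of the form appearing in \eqref{hvf}.
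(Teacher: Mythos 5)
Your averaging mechanism for the ``generic'' $t$ is essentially the paper's: time spent by a branch in a window of width $O(1)$ is at most the width divided by the speed $|\dot E_j|$, one sums over the $O(n^2)$ relevant branches, gets a bound on the $t$-average of the window count that is uniform in $n$, applies Chebyshev and a reverse-Fatou/pigeonhole step to get a large-measure set of $t$ lying in the good set for infinitely many $n$, and then invokes the Heller--O'Connor--Zelditch quasimode argument. (Also, as you note, monotonicity of Dirichlet eigenvalues under domain inclusion means no upper bound on $f_j$ is needed at all; your worries about a boundary local Weyl law with weight $\rho_t$ are a red herring for the Dirichlet stadium.) But there is a genuine gap at exactly the point you flag as the ``main obstacle'': the lower bound on the speed. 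Your proposed fix --- that a branch with $\int_{\partial S_t}\rho_t\psi_j^2$ nearly zero is impossible by unique continuation/Rellich --- is false. Unique continuation forbids the normal derivative from vanishing \emph{identically} on the wings, but it does not forbid $f_j(t)\to 0$ along a subsequence; indeed this asymptotic vanishing is precisely the expected behaviour when eigenfunctions are themselves close to bouncing-ball quasimodes, and it cannot be ruled out. Consequently your claimed uniform bound $\int_1^2 N_n(t)\,dt\le C$ over the whole parameter interval is unjustified: slowly moving branches could a priori contribute $O(n^2)$.

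The paper resolves this with a dichotomy you are missing. It splits $[1,2]$ into $Z_1$, where $\liminf_j f_j(t)=0$, and its complement $Z_2$. On $Z_1$ the counting argument is abandoned entirely: vanishing of the boundary mass on the curved sides along a subsequence, combined with the fact that the straight sides are non-strictly gliding and with the G\'erard--Leichtnam relation between interior quantum limits and boundary semiclassical measures, produces directly a quantum limit supported on trajectories avoiding the wings, i.e.\ on the bouncing-ball trajectories --- so these $t$ satisfy the theorem for a different reason, not because \eqref{subseq} holds. On $Z_2$ one still cannot use the pointwise positivity of $\liminf f_j(t)$ as is; an Egorov-type uniformization is needed to extract a subset $G_\epsilon$ of measure $\ge |Z_2|-2\epsilon$ on which $f_j(t)\ge c/2$ for all $j\ge N(\epsilon)$ \emph{uniformly}, and the time-in-window estimate and the $n$-independent bound on the averaged count are then proved only over $G_\epsilon$ (via \eqref{hvf} giving $-\dot E_j\ge cE_j/4$ on an open neighbourhood), not over all of $[1,2]$. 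Without the $Z_1$ branch and the uniformization on $Z_2$, your argument does not close.
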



\section{The main idea}
Before we give the proof of Theorem~\ref{strong}, we sketch the main idea. For simplicity, in this section we only attempt to argue  that there is at least one $t \in [1,2]$ such that $\Delta_t$ is non-QUE. To do so, let us assume that $\Delta_t$ is QUE for all for $t \in [1,2]$, and seek a contradiction. 

We begin with some heuristics. Let $A(t)$ denote the area of $S_t$. By Weyl's law, we have $E_j(t) \approx c A(t)^{-1} j$. Therefore, since the area of $S_t$ grows linearly with $t$, we have $\dot E_j \approx -\operatorname{const} A(t)^{-1} E_j $, on the average. \emph{The QUE assumption implies that this is true, asymptotically, at the level of each individual eigenvalue.} Indeed, let
\begin{equation}
f_j(t) = \int_{\partial S_t} \rho_t(s) |\psi_j(t; s)|^2 \, ds.
\label{fj-defn}\end{equation}
Then \eqref{hvf} says that $\dot E_j = -E_j f_j$, while the QUE assumption implies that the boundary values $|\psi_j(t)|^2$ tend weakly to $A(t)^{-1}$ on the boundary $\partial S_t$ \cite{GL}, \cite{HZ}, \cite{Burq}. In particular,  this shows that 
\begin{equation}
f_j(t) \to  k A(t)^{-1} > 0, 
\label{kt}\end{equation}
where $k = \int_{\partial S_t} \rho_t(s) \, ds > 0$ is independent of $t$. So, this gives 
\begin{equation}
E_j^{-1} \dot E_j = -k A(t)^{-1} (1 + o(1)), \quad j \to \infty.
\label{reg}\end{equation}
In particular, the magnitude of $E_j(t)^{-1} \dot E_j(t)$ is bounded below for large $j$. This prevents the eigenvalues conspiring to concentrate in intervals $[n^2 - a, n^2 + a]$. Indeed, such concentration, for every $t \in [1,2]$, would require that at least some eigenvalues `loiter' near $E = n^2$ for significant intervals of time $t$, which is ruled out by \eqref{reg}. 
The Heller-O'Connor-Zelditch argument from the Introduction then gives a contradiction to the QUE assumption. 

Rather than employing such a contradiction argument, however, we use a slightly more elaborate direct approach, which yields more information. 

\section{Proof of  Theorem~\ref{strong}}\label{proof}
We begin by dividing  the interval $[1,2]$ into two sets $Z_1 \cup Z_2$, where $Z_1$ is the set of $t$ such that 
\begin{equation}
\liminf_{j \to \infty} f_j(t) = 0,
\label{liminf=0}\end{equation}
where $f_j(t)$ is defined in \eqref{fj-defn}, and $Z_2$ is the complement (i.e. where the lim inf above is positive).

First, consider any $t \in Z_1$. Consider the semiclassical measures $\nu$ on the unit ball bundle of $\partial S_t$ studied in \cite{GL}. 
The relation \eqref{liminf=0} implies that there exists a $\nu$ which vanishes on the curved sides of the stadium. Such a $\nu$ cannot have mass on the boundary of the unit ball bundle,  since the straight part of the boundary is non-strictly gliding \cite{BG}. 
The relation between quantum limits $\mu$ and boundary measures $\nu$ in Theorem 2.3  of \cite {GL} then shows that there exists a  quantum limit $\mu$ supported on (interior) rays that do not meet the curved sides of the stadium.  The only such trajectories are the bouncing ball trajectories. Therefore, every $t \in Z_1$ satisfies the conditions of the theorem.

Next consider $t \in Z_2$. Given $\epsilon > 0$, there is a subset $H_\epsilon$ of $Z_2$, whose measure is at least  $|Z_2| - \epsilon$, such that 
$$
t \in H_\epsilon \implies \liminf_{j \to \infty} f_j(t) \geq c > 0,
$$
where $c$ depends on $\epsilon$. To see this, consider the sets $Z_2^n = \{ t \in Z_2 \mid \liminf f_j(t) \geq 1/n \}$. This is an increasing family of sets whose union is $Z_2$, so by countable additivity of Lebesgue measure, $|Z_2^n| \to |Z_2|$. 
In the same spirit, there is a subset $G_\epsilon$ of $H_\epsilon$, whose measure is at least  $|Z_2| - 2\epsilon$,  where this statement is uniform in $t$; in particular, there exists $N = N(\epsilon)$ such that 
$$
t \in G_\epsilon, \  j \geq N \implies  f_j(t) \geq \frac{c}{2}.
$$

Now we want to consider, for $t \in G = G_\epsilon$, the number of eigenvalues $E_j(t)$ in the interval $[n^2-a, n^2 + a]$. For a fixed $t$, it seems very difficult to improve on the bound $O(n)$ from the remainder estimate in Weyl's law. However, as we see below, one does much better by averaging in $t$. Thus, we shall give a good estimate on 
\begin{equation}
\int_{G_\epsilon} \big( N_t(n^2 + a) - N_t(n^2 - a) \big) \, dt
\label{est2}\end{equation}
for large $n$, where $N_t$ is the eigenvalue counting function for $\Delta_t$.  
This integral can be calculated by considering how much `time' $t$ each eigenvalue $E_j(t)$ spends in the interval $[n^2 - a, n^2 + a]$. 
By Weyl's Law, we have  $\gamma j \leq E_j(t) \leq \Gamma j$ for $t \in [1,2]$,  with $\gamma, \Gamma$ independent of $t$. Therefore, taking $n$ large enough so that $a \leq n^2/2$, we only need consider $j$ such that 
$n^2/2\Gamma \leq j \leq 3n^2/2\gamma$.
Thus, \eqref{est2} is equal to
\begin{equation}
\sum_{j = n^2/2\Gamma}^{3n^2/2\gamma} \Big|\{ t \in G_\epsilon \mid E_j(t) \in [n^2 - a, n^2 + a) \} \Big|. 
\label{j-sum2}\end{equation}

Next, we replace $G = G_\epsilon$ by an open set  containing $G$. 
On $G$ we have $f_j(t) \geq c/2$ for $j \geq N$.  Then the open set 
$$
O_n = \{ t \mid f_j(t) > c/4 \text{ for } N \leq j \leq 3n^2/2\gamma \}
$$ 
contains $G$.  Then for $t \in O_n$, and $n^2 \geq 2\Gamma N$, we have by \eqref{hvf}
\begin{equation}
-\dot E_j (t) \geq c E_j(t)/4, \quad \frac{n^2}{2\Gamma} \leq j \leq \frac{3n^2}{2\gamma}.
\label{4}\end{equation}
Integrating this, we find that for $t_1 < t_2$ in the same component of $O_n$, and $n^2/2\Gamma \leq j \leq 3n^2/2\gamma$,
\begin{equation}
E_j(t_1) - E_j(t_2) \geq \frac{c}{4} E_j(t_2) (t_2 - t_1) \implies 
t_2 - t_1 \leq \frac{4}{c} \frac{E_j(t_1) - E_j(t_2)}{E_j(t_2)}.
\label{diff}\end{equation}
Since $S_t$ is an increasing sequence of domains, the Dirichlet eigenvalues $E_j(t)$ are nonincreasing in $t$. Therefore  \eqref{diff} on each component of $O_n$ implies that  the quantity \eqref{j-sum2}, and hence \eqref{est2},
can be bounded above for $n^2 \geq \max(2a, 2\Gamma N)$ by 
\begin{equation}
\sum_{j = n^2/2\Gamma}^{3n^2/2\gamma} 2a \cdot \frac{4}{c} \cdot \frac1{n^2 - a}  \leq \sum_{j = 1}^{3n^2/2\gamma} 2a \cdot \frac{4}{c} \cdot \frac1{n^2/2}
= \frac{24a}{c\gamma}   .
\label{sum-bound2}\end{equation}

Therefore, on a set $A_n \subset G$ of measure at least $|G| - \epsilon \geq |Z_2| - 3\epsilon$, we can assert that $N_t(n^2 + a) - N_t(n^2 - a)$ is at most $\epsilon^{-1}$ times the right hand side of \eqref{sum-bound2}. That is, for  sufficiently large $n$, there is a set $A_n$ of measure at least $|Z_2| - 3\epsilon$ on which 
$$
N_t(n^2 + a) - N_t(n^2 - a) \leq \frac{24a}{c \gamma \epsilon}  ,
$$
which is a bound manifestly independent of $n$. 

To finish the proof we show that there is a set of measure at least $|Z_2| - 4\epsilon$ that is contained in $A_n$ for infinitely many $n$. 
That is, defining
\begin{equation}\label{bk}
B_k = \{ t \in Z_2 \mid t \in A_n \text{ for at least $k$ distinct values of $n$} \},
\end{equation}
we show that $|\cap_k B_k| \geq |Z_2| - 4\epsilon$. To show this consider the sets 
\begin{equation*}
D_k = \{ t \in Z_2 \mid t \in A_n \text{ for at least $k$ distinct values of $n$ in the range $k \leq n < 5k$} \}. 
\end{equation*}
Since $D_k \subset B_k$ and $B_k$ is a decreasing family of sets, it suffices to show that $|D_k| \geq |Z_2| - 4\epsilon$ for every $k$.  
To see this, on one hand, we have 
$$
\sum_{n=k}^{5k-1} |A_n| \geq 4k(|Z_2| - 3\epsilon).  
$$
On the other hand, by the definition of $D_k$,
$$
\sum_{n=k}^{5k-1} |A_n| \leq 4k |D_k| + k (|Z_2| - |D_k|),
$$
and putting these together we obtain
$$
|D_k| \geq |Z_2| - 4\epsilon,
$$
as required. 

We have now shown that for a subset of $Z_2$ of measure at least $|Z_2| - 4\epsilon$, there is a sequence of integers $n_j$ (depending on $t$) for which \eqref{subseq} holds, and therefore the mass statement in Theorem~\ref{strong} holds for all such $t$ using the argument from the Introduction. Thus the conclusion of Theorem~\ref{strong} holds for all $t \in Z_1$ and all $t \in Z_2$ except on a set of measure at most $4\epsilon$.  This completes the proof.

\section*{Appendix. By Andrew Hassell and Luc Hillairet}
In this appendix, we show how the proof above for the stadium domain can be adapted to partially rectangular domains $X_t$, thereby proving Theorem~\ref{ae} in full generality. Again we prove a stronger version
which gives more
information about non-Liouville quantum limits on $X_t$. To state this result, we denote by $BB$ the union of the bouncing-ball trajectories in $S^*X_t$, by $TT$ the union of billiard trajectories that do not enter the rectangle (`trapped trajectories'), and by $ET$ the excluded trajectories in \cite{ZZ}. The set $ET$, only relevant when $X_t$ has boundary, consists of the billiard trajectories that either (i) hit a non-smooth point of the boundary at some time, (ii) reflect from the boundary infinitely often in finite time, or (iii) touch $\partial X_t$ tangentially at some time\footnote{Here we do not exclude the trajectories that do not meet the boundary forwards or backwards in time as is done in \cite{ZZ}.}. 
All these sets have measure zero; that $TT$ has measure zero follows from ergodicity, while that $ET$ has measure zero is shown in \cite{ZZ}. 

\begin{theorem}\label{strong-prd} Let $X_t$ be a partially rectangular domain, and let $\Delta_t$ be the Laplacian on $X_t$. For every $\epsilon > 0$ there exists a subset $B_\epsilon \subset [1,2]$ of measure at least $1-4\epsilon$, and a strictly positive constant $m(\epsilon)$ with the following property. 
For every $t \in B_\epsilon$, there exists a quantum limit of $\Delta_t$ that either has mass at least $m(\epsilon)$ on $BB$, or else 
 concentrates entirely on $BB \cup TT \cup ET$.  \end{theorem}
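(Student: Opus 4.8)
The plan is to mirror the structure of the proof of Theorem~\ref{strong}, replacing the stadium-specific ingredients by their analogues for a general partially rectangular domain $X_t$. The Hadamard variational formula \eqref{hvf} must first be re-established in this setting. If $X_t$ has boundary, we again realize $X_t$ as $X_1$ equipped with a real-analytic family of metrics $g_t$ supported near the gluing region $\{x = \pm\alpha\}$ where the variation takes place, obtain a holomorphic family of operators of Kato type A, and conclude that eigenvalue branches and eigenprojections are holomorphic in $t$; the variational derivative is then $E^{-1}\dot E(t) = -\int_{\partial X_t} \rho_t(s)\psi(t;s)^2\,ds$ with $\rho_t \geq 0$ concentrated on the part of $\partial X_t$ that moves. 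If $X_t$ is closed (a cylinder glued into $W$, as for the Donnelly surfaces), there is no boundary term, but the same family-of-metrics argument gives $E^{-1}\dot E(t) = -\int_{X_t} \rho_t(x)\,|\nabla u(t;x)|^2 \, dg$ (or the analogous interior expression coming from $\dot L_t = [L_t, \partial_x^t\Phi + \Phi\partial_x^t]$), again with $\rho_t \geq 0$ supported away from the rectangle. In all cases the monotonicity $\dot E_j(t) \leq 0$ holds, which is what drives the counting argument.

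Next I would set $f_j(t)$ equal to $-E_j^{-1}\dot E_j(t) \geq 0$, split $[1,2] = Z_1 \cup Z_2$ according to whether $\liminf_j f_j(t)$ is zero or positive, and reproduce verbatim the averaging argument of Section~\ref{proof} on $Z_2$: pass to $H_\epsilon$, $G_\epsilon$, the open enlargement $O_n = \{t \mid f_j(t) > c/4,\ N \leq j \leq 3n^2/2\gamma\}$, integrate $-\dot E_j \geq (c/4)E_j$ across each component of $O_n$, and bound $\int_{G_\epsilon}(N_t(n^2+a) - N_t(n^2-a))\,dt$ by a constant independent of $n$. The combinatorial Borel--Cantelli-type step producing a set of measure $\geq |Z_2| - 4\epsilon$ lying in $A_n$ for infinitely many $n$ is unchanged, and on that set \eqref{subseq} holds, so the Heller--O'Connor--Zelditch quasimode argument from the Introduction yields a quantum limit with mass $\geq m(\epsilon)$ on $BB$. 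Here the bouncing-ball quasimodes $v_n = \chi(x)\times(\text{1-d eigenfunction in } y)$, supported strictly inside the rectangular piece, satisfy $\|(\Delta_t - n^2)v_n\| \leq K$ exactly as before since the rectangle carries the flat product metric.

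The genuinely new work is the analysis on $Z_1$, where for the stadium one used that the only interior rays avoiding the curved boundary are the bouncing-ball ones, together with the Gérard--Leichtnam correspondence between boundary semiclassical measures $\nu$ and quantum limits $\mu$, and the non-strict gliding of the flat boundary to exclude mass on the glancing set. For a general $X_t$ the relation $\liminf_j f_j(t) = 0$ produces a semiclassical boundary measure $\nu$ (or interior defect measure, in the closed case) that vanishes on the region where $\rho_t$ is positive, i.e. on the part of $\partial X_t$ — or of the interior of $W$ — that is strictly outside the rectangle. One must then argue that an invariant measure on $S^*X_t$ whose associated boundary data vanish on that set, and which by Burq--Zworski (or rather by invariance under the billiard flow plus ergodicity of the flow restricted to trajectories meeting $W$) cannot live on a positive-measure set of trajectories entering $W$ nontrivially, must be supported on $BB \cup TT \cup ET$: a trajectory that enters the rectangle and also enters $W$ "far enough" to cross the support of $\rho_t$ would force $\nu$ to charge that region. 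This is why the conclusion on $Z_1$ is the weaker alternative "concentrates entirely on $BB \cup TT \cup ET$" rather than "positive mass on $BB$".

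The main obstacle I expect is precisely this last point: making rigorous the passage from "$\nu$ vanishes on the moving part of the boundary" to "$\mu$ is supported on $BB \cup TT \cup ET$", which requires (i) a careful version of the Gérard--Leichtnam propagation/correspondence result valid for the non-smooth boundary $\partial X_t$ — handled by restricting attention to the set $ET$ of excluded trajectories, shown in \cite{ZZ} to have measure zero and to be avoided by $\mu$-a.e. trajectory — and (ii) ruling out the glancing set, which for the flat sides of the rectangle follows from non-strict gliding as in \cite{BG}, but at the curved or generic parts of $\partial X_t$ needs the measure-zero argument of \cite{ZZ} again. Once these two facts are in hand, a trajectory not in $ET$ either never enters $W$ (hence lies in $TT$ or in $BB$) or enters $W$ and thus must cross $\{\rho_t > 0\}$, contradicting $\nu|_{\{\rho_t>0\}} = 0$ unless the set of such trajectories is $\mu$-null; combining with the $Z_2$ analysis and the bound $|Z_1 \cup Z_2| = 1$ gives the stated measure $\geq 1 - 4\epsilon$.
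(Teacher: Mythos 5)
Your overall skeleton (the $Z_1/Z_2$ dichotomy, the averaged counting argument, the Borel--Cantelli step, and the Heller--O'Connor--Zelditch quasimode argument on $Z_2$) matches the paper, but two of your key claims have genuine gaps. First, the assertion that ``in all cases the monotonicity $\dot E_j(t)\leq 0$ holds'' is unjustified and in general false. Domain monotonicity gives $\dot E_j\le 0$ only for the Dirichlet problem on an increasing family of domains (the stadium case); for Neumann/Robin conditions and for closed partially rectangular manifolds (Donnelly surfaces) there is no such monotonicity, and the correct interior variational formula is not $E^{-1}\dot E=-\int\rho_t|\nabla u|^2$ with $\rho_t\ge0$: since the deformation must be a stretch in $x$ supported inside the flat rectangle, one gets (Proposition~\ref{Edot}) $\dot E=-\tfrac12\langle Qu,u\rangle$ with $Q=-4\partial_x\phi_t\partial_x+[\partial_x,[\partial_x,\phi_t]]$, in which only the $x$-derivative appears and the double-commutator term is a zeroth-order multiplication with no sign, so all one can conclude is $\dot E\le C$ as in \eqref{dotEleqC}. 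Your $Z_2$ counting argument genuinely uses global monotonicity (it is what guarantees that the total decrease of $E_j$ through the window $[n^2-a,n^2+a)$, summed over the components of $O_n$, is at most $2a$); without it a branch may re-enter the window. The paper repairs this by passing to $E^*_j(t)=E_j(t)-Ct$, which is nonincreasing, and enlarging the window by $2C$ as in \eqref{j-sum2*}; this step is missing from your proposal.

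Second, your $Z_1$ analysis does not go through for general $X_t$. The metric variation cannot be localized in $W$ or ``on the part of $\partial X_t$ that moves'': the only way to realize $X_t$ as $(X_1,g_t)$ is to stretch inside the rectangle, so the quantity $f_j$ detects eigenfunction mass over $\supp\phi\subset R$, not near $\partial W$. Moreover, for a general wing the Hadamard normal variation has no sign (think of the rigidly translated Sinai obstacle, where $\rho_t$ is positive on one side and negative on the other), so $f_j\to 0$ would not even imply vanishing of a boundary measure on $\{\rho_t>0\}$ because of possible cancellation; in the closed case there is no boundary measure at all; and even when there is a boundary, there exist trajectories not in $BB\cup TT\cup ET$ that never meet $\partial X_t$ (the footnote defining $ET$ deliberately does not exclude them), so no argument phrased purely in terms of boundary measures can force concentration on $BB\cup TT\cup ET$. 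The paper's route is different and more robust: from $f_{j_k}(t)\to0$ and \eqref{Q-defn} one gets $\|(\phi_t)^{1/2}(h\partial_x)u_{j_k}\|_{L^2}\to0$, i.e.\ the quantum limit assigns no mass to non-vertical covectors over $\supp\phi$, and then propagation of the limit along the broken billiard flow (wave parametrices near nontangential reflections at smooth boundary points, avoiding the measure-zero set $ET$) spreads this vanishing to every point of $S^*X_t\setminus(BB\cup TT\cup ET)$, giving the stated concentration. You would need to supply this interior localization-plus-propagation step (or some substitute) to close the $Z_1$ case.
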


\begin{remark} Since $BB \cup TT \cup ET$ has measure zero, this implies that $\Delta_t$ is non-QUE.
\end{remark}

The main task is to replace the boundary formula \eqref{hvf} for the variation of eigenvalues with an interior formula. 
Let $X$ be a partially rectangular domain with rectangular part $[-\pi/2, \pi/2] \times [-\pi/2, \pi/2]$, and let $X_t$ be the domain with the rectangle replaced by $[-t\pi/2, t\pi/2] \times [-\pi/2, \pi/2]$, where $t \in [1,2]$. 
Let $\Delta_t$ denote the Laplacian on $X_t$ with either the Dirichlet boundary condition or the Robin boundary condition  $d_n u = bu$, $b \in \RR$ constant (which of course includes the Neumman condition as the special case $b=0$).

We now compute the variation of the eigenvalues of $\Delta_t$
with respect to $t$. To state this result, we introduce some notation.
Let $g_t$ and $L_t$ be as in Section~\ref{Hadamard}, and let $I_t$ denote the isometry from $(X_1, g_t)$ to $X_t$. 
Let $M_t$ denote the multiplication operator $(1 + (t-1) \phi(x))^{-1/2}$,
and let $\partial_x^t$ denote $M_t \partial_x M_t$. Then 
$L_t$ is given by $-(\partial_x^t)^2 - \partial_y^2 $ on its domain. Let $\phi_t$ denote the function $\phi M_t^2 \circ I_t^{-1}$ on $X_t$.

\begin{proposition}\label{Edot} Let $u(t)$ be a real eigenfunction of $\Delta_t$, $L^2$-normalized on $X_t$, with eigenvalue $E(t)$, depending smoothly on $t$.  Let $Q$ be the operator
\begin{equation}
Q = - 4 \partial_x \phi_t \partial_x + [\partial_x, [\partial_x, \phi_t]]
\label{Q-defn}\end{equation}
acting on functions on $X_t$. Then 
\begin{equation}
\dot E(t) =  - \frac1{2}\langle Q u(t),  u(t) \rangle 
\label{dotE}\end{equation}
and there exists $C$, depending only on the function $\phi$, such that
\begin{equation}
\dot E(t) \leq C, \quad t \in [1, 2].
\label{dotEleqC}\end{equation}
\end{proposition}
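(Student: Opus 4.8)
The plan is to derive the formula \eqref{dotE} by differentiating the eigenvalue equation $L_t u(t) = E(t) u(t)$ in $t$, exactly as one does for a holomorphic family of type A, and then to read off the bound \eqref{dotEleqC} from the explicit form of the operator $Q$. First I would write, for an $L^2(X_1;dg_1)$-normalized smooth family $u(t)$ with $\langle u(t),u(t)\rangle = 1$, the identity $\dot E(t) = \langle \dot L_t\, u(t), u(t)\rangle$, which follows by differentiating $\langle L_t u, u\rangle = E(t)$ and using self-adjointness together with $\frac{d}{dt}\langle u, u\rangle = 0$; the boundary condition is preserved along the family (Dirichlet, or Robin with fixed $b$, transported by the isometry $I_t$), so no boundary terms appear in this step. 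The content is therefore to compute $\dot L_t$. Since $L_t = -(\partial_x^t)^2 - \partial_y^2$ with $\partial_x^t = M_t \partial_x M_t$ and $M_t = (1+(t-1)\phi(x))^{-1/2}$, one differentiates: $\frac{d}{dt}M_t = -\tfrac12 \phi M_t^3$, so $\dot\partial_x^t = \dot M_t \partial_x M_t + M_t \partial_x \dot M_t$, and then $\dot L_t = -(\dot\partial_x^t \partial_x^t + \partial_x^t \dot\partial_x^t)$. Evaluating at a given $t$ and rearranging, using $M_t^2 = (1+(t-1)\phi)^{-1}$ and the definition $\phi_t = \phi M_t^2 \circ I_t^{-1}$, this collapses (after transporting back to $X_t$ via $I_t$) to $\dot L_t = -\tfrac12 Q$ with $Q = -4\partial_x \phi_t \partial_x + [\partial_x,[\partial_x,\phi_t]]$; the somewhat fiddly but routine part is checking the commutator bookkeeping and that all the conjugations by $M_t$ combine to produce exactly the stated coefficient $\phi_t$ and the stated numerical constants. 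This gives \eqref{dotE}.

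For the bound \eqref{dotEleqC} I would estimate $|\langle Q u(t), u(t)\rangle|$ using that $u(t)$ is $L^2$-normalized. The second term $\langle [\partial_x,[\partial_x,\phi_t]] u, u\rangle = \langle \phi_t'' u, u\rangle$ is controlled by $\|\phi_t''\|_\infty$, which is bounded uniformly in $t \in [1,2]$ because $\phi$ is a fixed smooth compactly supported function and $t$ ranges over a compact interval on which $1 + (t-1)\phi \geq 1$. The first term, $-4\langle \partial_x \phi_t \partial_x u, u\rangle = 4\langle \phi_t \partial_x u, \partial_x u\rangle$ after integrating by parts (again no boundary term: $\phi_t$ is supported near $x=0$, well inside the rectangle, so $\phi_t$ vanishes near $\partial X_t$), is $4\int \phi_t |\partial_x u|^2$. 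Here I need an a priori bound on $\int \phi_t |\partial_x u|^2$: since $\phi_t \geq 0$ and $\phi_t \le \|\phi_t\|_\infty$, this is at most $\|\phi_t\|_\infty \int_{X_t} |\partial_x u|^2 \le \|\phi_t\|_\infty \langle \Delta_t u, u\rangle = \|\phi_t\|_\infty E(t)$ — but this is not uniformly bounded, so the crude estimate is not enough, and this is the main obstacle.

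The resolution — and the key point — is that the first term has a favorable sign: $\phi_t \geq 0$, hence $4\langle \phi_t \partial_x u, \partial_x u\rangle \geq 0$, so this term only makes $\dot E(t)$ \emph{more negative} and can simply be dropped when proving an \emph{upper} bound. Thus $\dot E(t) = -\tfrac12\langle Q u, u\rangle = -2\langle \phi_t \partial_x u,\partial_x u\rangle - \tfrac12\langle \phi_t'' u, u\rangle \le \tfrac12 \|\phi_t''\|_\infty \le C$, where $C$ depends only on $\phi$ (through the uniform bound on $\|\phi_t''\|_\infty$ over $t\in[1,2]$). This matches the fact, noted after the Hadamard lemma, that $\rho_t \geq 0$ forces $\dot E_j \le 0$ in the boundary formula; here the interior formula has a sign-definite leading piece plus a uniformly bounded lower-order correction. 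So the real structural insight to highlight is the sign of $\phi_t$; the rest is differentiation of a type-A family and elementary estimates. I would organize the write-up as: (i) $\dot E = \langle \dot L_t u, u\rangle$; (ii) computation of $\dot L_t = -\tfrac12 Q$; (iii) integration by parts and the sign argument for \eqref{dotEleqC}.
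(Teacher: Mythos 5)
Your proposal follows essentially the same route as the paper: the first-order perturbation formula $\dot E=\langle \dot L_t v,v\rangle$ for the type-A family, the explicit differentiation of $\partial_x^t=M_t\partial_x M_t$ to identify $\dot L_t$ with $-\tfrac12 Q$, and then the bound \eqref{dotEleqC} obtained exactly as in the paper by noting that $-4\partial_x\phi_t\partial_x$ is a nonnegative operator (so that term can be dropped for an upper bound) while $[\partial_x,[\partial_x,\phi_t]]$ is multiplication by a function bounded uniformly for $t\in[1,2]$. This is correct and matches the paper's proof, with only cosmetic differences (working on $X_1$ versus transporting to $X_t$).
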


\begin{proof} 
Let $v(t)$ be the eigenfunction of $L_t$ corresponding to $u(t)$. Then 
$$
E(t) = \langle L_t v(t), v(t) \rangle.
$$
Since $\dot v(t)$ is orthogonal to $v(t)$, while $L_t v(t)$ is a multiple of $v(t)$, we have
\begin{equation}
\dot E(t) = \langle \dot L_t v(t), v(t) \rangle.
\label{dotE1}\end{equation}
Using the expression for $L_t$ above, we have
$$
\dot L_t = \partial_t \big( \partial_x^t)^2,
$$
and since
$$
\partial_t \partial_x^t = -\frac1{2} \Big( \phi M_t^2 \partial_x^t +   \partial_x^t M_t^2 \phi \Big),
$$
we obtain
$$
\dot L_t = -\frac1{2} \Big(\phi M_t^2 (\partial_x^t)^2 +      2 \partial_x^t \phi M_t^2 \partial_x^t +  (\partial_x^t)^2 \phi M_t^2 \Big).
$$
Substituting this into \eqref{dotE1} gives an expression for $\dot E(t)$ in terms of $v(t)$. Writing this in terms of $u(t)$ on $X_t$ gives the equivalent expression
\begin{equation}
\dot E(t) = - \frac1{2}  \Big\langle \Big(\phi_t \partial_x^2 +      2 \partial_x \phi_t  \partial_x +  \partial_x^2 \phi_t \Big) u(t), u(t) \Big\rangle
\label{EEE}\end{equation}
which can be rearranged to \eqref{dotE}. To prove \eqref{dotEleqC}, we observe that $- 4 \partial_x \phi_t \partial_x$ is a positive operator, while $[\partial_x, [\partial_x, \phi_t]]$ is a multiplication operator by a smooth function of $x$ and $t$, and hence bounded as an operator on  $L^2$ by a constant independent of $t$ for $t \in [1,2]$. 
\end{proof}

Now we indicate how the proof in Section~\ref{proof} may be modified to prove Theorem~\ref{strong-prd}. We redefine $f_j(t)$ by
\begin{equation}
f_j(t) = E_j^{-1} \langle Qu_j, u_j \rangle
\label{fj-defn2}\end{equation}
so that $\dot E_j(t) = - E_j(t) f_j(t)$ as above, and partition the $t$-interval $[1,2]$ into $Z_1 \cup Z_2$ as before. Consider any $t \in Z_1$. Then there is an increasing sequence $j_k$ of integers such that $f_{j_k}(t) \to 0$. In this case, we can construct an operator properly supported in the interior of $X_t$ for which \eqref{qe} fails to hold for $j = j_k \to \infty$. 
Choose a function $\zeta(y)$ taking values between 0 and 1 which is equal to $1$ near $y=0$ and vanishes near $|y| = \pi/2$. Then in view of \eqref{fj-defn2} and \eqref{Q-defn},
\begin{equation}\begin{gathered}
\| \zeta(y) (\phi^t)^{1/2} (h\partial_x) u_{j_k} \|_2^2 \leq 
\| (\phi^t)^{1/2} (h\partial_x) u_{j_k} \|_2^2 \\
= -\langle  h^2 Q u_{j_k},  u_{j_k} \rangle + O(h)
\to 0, \quad h = h_{j_k} = E_{j_k}^{-1/2}. 
\end{gathered}\end{equation}
Therefore, defining $A_h = \zeta(y)^2 (h \partial_x) \phi^t(x) (h \partial _x)$, we have
$$
\lim_{k \to \infty} \langle A_{h} u_{j_k}, u_{j_k} \rangle = \| \zeta(y) (\phi^t)^{1/2} (h\partial_x) u_{j_k} \|_2^2 \to 0 \neq \frac1{S^*X_t} \int_{S^*X_t} \sigma(A_h), \quad h = h_{j_k} ,
$$
since $\sigma(A_h) \geq 0$, and is $> 0$ on a set of positive measure. 
Thus $X_t$ is not QUE. 
Moreover, using the pseudodifferential calculus,  this  implies that $\langle B_{h_{j_k}} u_{j_k}, u_{j_k} \rangle \to 0$ for any $B_h$ microsupported where $\sigma(A_h) > 0$. Then, parametrix constructions for the wave operator microlocally near rays that reflect nontangentially at the boundary (see for example \cite{Chaz}) show that $\langle B'_{h_{j_k}} u_{j_k}, u_{j_k} \rangle \to 0$ for any $B'_h$ with symbol supported close to any $q'  \in S^* X_t^\circ$ enjoying the property that it is obtained from a point $q$ such that $\sigma(A_h) (q) > 0$ by following the billiard flow through a finite number of nontangential reflections at smooth points of $\partial X_t$. 
This property is true for any  $q' \notin BB \cup TT \cup ET$ (for a suitable choice of $\zeta$ depending on $q'$), since the symbol of $A_h$ is positive on all unit covectors lying over $\supp \phi^t \times \supp \zeta$ which are not vertical. 
It follows that the sequence $u_{j_k}$ concentrates away from all such points, which is to say that it concentrates at $BB \cup TT \cup ET$.

The argument for $t \in Z_2$ goes just as before, except that instead of the nonincreasing condition $\dot E_j(t) \leq 0$, we only have the weaker condition $\dot E_j(t) \leq C$ thanks to \eqref{dotEleqC}. We modify the argument below equation \eqref{diff} as follows: 
Define $ E^*_j(t) = E_j(t) - Ct$; then $\dot E^*_j(t) \leq 0$ and \eqref{4} is valid for $E_j^*(t)$, hence we obtain using the method of Section~\ref{proof}
\begin{equation}
\sum_{j = n^2/2\Gamma}^{3n^2/2\gamma} \Big|\{ t \in G_\epsilon \mid E^*_j(t) \in [n^2 - a^*, n^2 + a^*) \} \Big| \leq  \frac{24a^*}{c\gamma}  .
\label{j-sum2*}\end{equation}
Now we use the observation
$$E_j(t) \in [n^2 - a, n^2 + a) \implies E_j^*(t) \in [n^2 - a - 2C, n^2 + a + 2C)$$
to deduce the estimate \eqref{j-sum2*} for $E_j(t)$ with $a^*$ replaced by $a$ on the left hand side and by $a + 2C$ on the right hand side. 
The rest of the argument from Section~\ref{proof} can now be followed to its conclusion.

\end{document}